\documentclass[11pt]{amsart}


\usepackage[shortalphabetic]{amsrefs}
\usepackage{eucal,graphicx,amssymb,mathrsfs}
\usepackage[all, knot]{xy}


\theoremstyle{definition}

\newtheorem*{ex*}{Example}

\theoremstyle{plain}
\newtheorem{thm}{Theorem}[section]
\newtheorem{prop}[thm]{Proposition}
\newtheorem*{cor*}{Corollary}

\newtheorem{lem}[thm]{Lemma}

\newtheorem{thm*}{Theorem}

\newtheorem*{thmA}{Theorem A}

\theoremstyle{remark}
\newtheorem{rem}{Remark}

\numberwithin{equation}{section}


\newcommand{\origin}{\mathbf{0}}

\DeclareMathOperator{\id}{id}

\DeclareMathOperator{\supp}{supp}

\DeclareMathOperator{\SL}{SL}


\def\C{\mathbb{C}}

\def\P{\mathbb{P}}
\def\R{\mathbb{R}}
\def\Z{\mathbb{Z}}
\def\P{\mathbb{P}}

\def\S{\mathbb{S}}

\def\N{\mathbb{N}}
\def\TT{\mathbf{T}}

\def\RR{{\mathcal{R}}}

\def\fan{\Delta}

\def\n0{{\bf n_0}}


\textwidth6.5in

\setlength{\topmargin}{0.6in} \addtolength{\topmargin}{-\headheight}
\addtolength{\topmargin}{-\headsep}

\setlength{\oddsidemargin}{0in}

\oddsidemargin  0.0in \evensidemargin 0.0in 


\title{Rotation Numbers of Elements in Thompson's Group ${\bf T}$}


\author{Jeffrey Diller}
\address{Department of Mathematics\\
         University of Notre Dame\\
         Notre Dame, IN 46556}
\email{diller.1@nd.edu}

\author{Jan-Li Lin}
\address{Department of Mathematics\\
         Northwestern University\\
         Evanston, IL 60208}
\email{janlin@math.northwestern.edu}

\thanks{This work was supported in part by National Science Foundation grant DMS--1066978.}
\subjclass{37E10, 37E45}
\keywords{Thompson's group, rotation number, fans, cones}
\begin{document}

\begin{abstract}
We give a simple combinatorial proof that the rotation number for each element in Thompson's group ${\bf T}$
is rational.
\end{abstract}

\maketitle

\section{Introduction}
\label{introduction}

In 1965, Richard Thompson defined three groups which have furnished counterexamples to various conjectures in group theory.  One of these $\TT$, which will be the subject of this article, is the group of `dyadic' circle homeomorphisms $f:\S^1\to \S^1$.  That is, if one takes $\S^1$ to be the interval $[0,1]$ with endpoints identified, then
\begin{itemize}
\item $f$ preserves the set of dyadic rational numbers (i.e., numbers of the form $p\cdot2^q$, $p,q\in\Z$),
\item $f$ is linear except at a finite number of dyadic rational points,
\item on each interval such that $f$ is linear, the derivative (i.e., slope) is a power of $2$.
\end{itemize}
For more information on Thompson's groups, see~\cites{CFP, Ghys, GS}. (Notice that the group ${\bf T}$ is denoted by $G$ in \cites{Ghys, GS}.)

The \emph{rotation number} \cite{KH} of an orientation preserving circle homeomorphism $f:\S^1\to \S^1$ is the quantity
\[
\rho(f)=\lim_{n\to\infty}\frac{\widetilde{f}^n(x)}{n} \mod \Z,
\]
where $\widetilde{f}:\R\to\R$ is any lift of $f$ to the universal cover $\R$, and $x\in\R$ is any initial point.  Modulo $\Z$, the limit does not
depend on the choices of $\widetilde{f}$ and $x$, and it is an important dynamical invariant for $f$.  In particular $\rho(f)$ is rational if and only if $f$ has a periodic point.   In 1987, E. Ghys and V. Sergiescu~\cite{GS} proved

\begin{thmA}\mbox{}
\begin{itemize}
\item[I.]
For every element $f$ in Thompson's group ${\bf T}$, the rotation number $\rho(f)$ is rational.
\item[II.]
For every rational number $r$, there is an element $f\in{\bf T}$ such that $\rho(f)=r\mod\Z$.
\end{itemize}
\end{thmA}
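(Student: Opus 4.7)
\emph{Part II.} For each $r = p/q$ in lowest terms, I will exhibit an explicit element $f \in \TT$ of order $q$ with $\rho(f) = p/q$. The construction: pick dyadic breakpoints $0 = a_0 < a_1 < \cdots < a_{q-1} < 1$ on $\S^1$ and let $f$ send each $[a_i, a_{i+1}]$ affinely onto $[a_{i+p}, a_{i+p+1}]$, indices modulo $q$. The condition $f \in \TT$ amounts to requiring each slope $(a_{i+p+1} - a_{i+p})/(a_{i+1} - a_i)$ to be a power of $2$. Choosing the consecutive differences $d_i = a_{i+1} - a_i$ to all be powers of $2$ that sum to a power of $2$ does the job; for example, for $p/q = 1/5$ the breakpoints $0, 1/8, 3/8, 5/8, 7/8$ give slopes $2, 1, 1, 1/2, 1$. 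Such an $f$ has order $q$ and cyclically permutes its breakpoints by $p$ positions, so $\rho(f) = p/q$.

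\emph{Part I.} By Poincar\'e's classical theorem, $\rho(f)$ is rational if and only if $f$ has a periodic orbit on $\S^1$. So it is enough to produce a finite $f$-invariant subset $P \subset \S^1$: then $f$ permutes $P$, and $\rho(f)$ equals the combinatorial rotation number of this permutation on any of its cycles, which is automatically rational.

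The plan for constructing $P$ is combinatorial. I would associate to $f$ a finite decomposition of $\S^1$ into dyadic intervals --- a fan, in the spirit of the paper's keywords --- whose vertex set is invariant under $f$. The natural starting point is the breakpoint set $B$ of $f$, but $f(B) \neq B$ in general, so $B$ must be enlarged. The right enlargement should use the combinatorial data carried by $f$: the slope exponents on each piece and the dyadic denominators of the breakpoints themselves.

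The main obstacle is finiteness. A priori, forward iterates of a breakpoint under $f$ can lie in dyadic rationals of arbitrarily large $2$-adic denominator, so the naive forward orbit of $B$ need not be finite. To overcome this I would exploit the rigidity specific to $\TT$: the slope exponents of $f$ are integers, and the fact that $f$ is a homeomorphism of $\S^1$ imposes a global balance condition on these exponents (the interval-length-weighted slopes sum to $1$). This rigidity should bound the denominators arising along the orbit of $B$, producing a finite invariant set and completing the proof.
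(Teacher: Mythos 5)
Part II of your proposal is correct, and in fact slightly more direct than the paper's: you realize $p/q$ in one step with an explicit order-$q$ element, whereas the paper only constructs rotation number $1/n$ by rotating the sectors of a regular fan with $n$ sectors and passes to general $p/q$ implicitly via powers. The problem is Part I. The reduction to ``produce a finite $f$-invariant subset of $\S^1$'' is sound, but the construction you sketch for that set cannot work, and the step you defer --- that the rigidity of $\mathbf{T}$ ``should bound the denominators arising along the orbit of $B$'' --- is false rather than merely unproven. Already for the standard generator of Thompson's group $F\subset\mathbf{T}$, given by $x\mapsto x/2$ on $[0,1/2]$, $x\mapsto x-1/4$ on $[1/2,3/4]$, and $x\mapsto 2x-1$ on $[3/4,1]$, the breakpoint $1/2$ has forward orbit $1/2,1/4,1/8,\dots$, so the orbit of $B$ is infinite with unbounded denominators and no finite invariant set containing $B$ exists. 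Worse, the periodic point that witnesses rationality need not be dyadic at all: an element of $\mathbf{T}$ equal to $x\mapsto x/4+1/4$ on $[1/4,1/2]$ attracts that entire interval to the non-dyadic fixed point $1/3$, so no enlargement of $B$ within the dyadic rationals can contain a periodic point from that region. A finite invariant set consists of periodic points, and these are generically invisible to the dyadic combinatorics of the breakpoints.

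The missing idea --- the heart of the paper's argument --- is to track intervals rather than points. In the paper's model ($F$ a piecewise linear automorphism of $\Z^2$ acting on fans), one refines a compatible regular fan until every ray is \emph{deterministic}: each cone's forward images under all iterates stay inside single cones, so $F$ induces a genuine self-map $F_\sharp$ of the finite set of cones. Showing that such a refinement is reached after finitely many steps is the substantive work (via the decomposition of Proposition~\ref{prop:decomposition} into simple splits and merges), and it has no counterpart in your outline. Once it is in place, either every ray maps to a ray, so $F$ permutes rays and has finite order, or some sector is $F_\sharp$-periodic; an iterate of $F$ then maps a closed sector into itself, and the intermediate value theorem produces a fixed real ray inside it --- a periodic point of the circle map that is typically not dyadic. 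Your Part I needs to be rebuilt along these lines, replacing ``finite invariant point set grown from $B$'' by ``finite invariant interval decomposition plus a fixed-point argument inside a periodic interval.''
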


\noindent However, as Ghys pointed out in \cite{Ghys}, their proof is very indirect, and there is a need for a better proof.  
Several simpler proofs were given later, in articles by Liousse \cite{Li} and Calegari~\cite{Cal} and the thesis of Matucci~\cite{Ma}. An approach via revealing pairs, implicit in the article \cite{BBetc}, was discussed at length during a 2007 Luminy talk by Bleak. 
The goal of this paper is to provide another elementary combinatorial proof of their result.
It is similar in many respects to those of Calegari and Matucci, but ours uses a different description of $\mathbf{T}$ as the group of piecewise linear automorphisms of $\Z^2$, and where Calegari uses Thurston's notion of a `train track' ours uses combinatorial ideas connected with `fans', i.e. partitions of $\Z^2\subset\R^2$ into rational convex cones. Like his proof, ours provides an effective way to compute rotation numbers for elements in ${\bf T}$.

Fans in $\R^2$ arise naturally in the study of toric surfaces, and the elements of $\bf T$ arise in particular (see~\cites{Us,Fav}) as ``tropicalizations" of plane birational maps preserving the $(\C^*)^2$-invariant two form $\frac{dx\wedge dy}{xy}$.  The ideas in this paper descend from our recent work~\cite{DL}*{Theorem E} concerning dynamics of such maps, and an older result~\cite{DF}*{Theorem 0.1} of Favre and the first author.  Nevertheless, the presentation here is purely combinatorial, based on an analysis of how elements of $\bf T$ act on fans and their refinements, and it makes no appeal to algebraic geometry.

The rest of the paper proceeds as follows. We introduce terminology and notation associated to cones, fans, and  piecewise linear automorphisms of $\Z^2$ in Section~\ref{Sec:PL_auto}, concluding with a description of the relationship between piecewise linear automorphisms and dyadic circle homeomorphisms.  In Section~\ref{Sec:decomposition} we establish a key result (Proposition \ref{prop:decomposition}) about decomposing piecewise linear automorphisms. Section~\ref{Sec:proof} then concludes the proof of Theorem A.

We are indebted to Charles Favre for pointing out that while the Theorem A is a corollary of results in our earlier paper~\cite{DL}, one can extract from that source a direct proof involving no algebraic geometry.  We would also like to thank Victor Kleptsyn for informing us about the papers \cite{Li} and (especially) \cite{Cal} mentioned above.  Finally, we would like to thank the referee for a careful reading and for correcting many notational inconsistencies and typos.

\section{Piecewise Linear Automorphisms of $\Z^2$}
\label{Sec:PL_auto}

Let $\R_+:=\{x\in\R\,|\,x\ge 0\}$ denote the set of non-negative real numbers. A {\em ray} in $\R^2$ is a rational one dimensional cone, i.e. a set of the form $\tau=\R_+ v$ for some $v\in\Z^2\setminus\{\origin\}$.  The first lattice point $v\in\tau\cap(\Z^2\setminus\{\origin\})$ is called the \emph{generator} of $\tau$.
We will call one dimensional cones with (possibly) irrational slope \emph{real rays}.  A {\em sector} is a rational two dimensional cone, i.e. a convex set
$\sigma = \R_+ v_1 + \R_+ v_2$ for some $v_1,v_2\in\Z^2\setminus\{\origin\}$ linearly independent over $\R$.  The rays $\tau_i = \R_+ v_i$, $i=1,2$, bounding $\sigma$ are called {\em facets} of $\sigma$.  The sector $\sigma$ is \emph{regular} if the generators $v_1,v_2$ of the facets of $\sigma$ form a basis for $\Z^2$, i.e. if $|\det[v_1\ v_2]|= 1$.

A \emph{fan} $\fan$ is a set of cones consisting of the zero-dimensional cone $\{\origin\}$, a finite sequence of rays $\tau_0,\tau_1,\cdots,\tau_d$ given in counterclockwise order, and the intervening sectors $\sigma_j$ bounded by $\tau_{j-1}$ and $\tau_j$, $1\leq j\leq d$.  The support of $\fan$, denoted $\supp(\fan)$, is the union of all cones in $\fan$. We assume throughout that our fans are \emph{complete}, i.e. i.e., the sectors $\sigma_j$ in $\Delta$ cover $\R^2$ (hence $\tau_0 = \tau_d$).  A fan is {\em regular} if all its sectors are regular.

%
%


A continuous map $F:\R^2\to\R^2$ is {\em piecewise linear} if there exists a fan $\fan$ with sectors $\sigma_1,\cdots,\sigma_d$ and linear transformations $L_i:\R^2\to\R^2$, $i=1,\cdots,d$ such that $F|_{\sigma_i}=L_i|_{\sigma_i}$ for all $i=1,\cdots,d$.  Any fan satisfying the above condition is said to be {\em compatible} with $F$.
The map $F$ is orientation preserving if and only if each of $L_i$ is.
A {\em piecewise linear automorphism of $\Z^2$} is an orientation preserving,
piecewise linear homeomorphism $F:\R^2\to\R^2$ such that
$F(\Z^2)=\Z^2$. If $F$ is a piecewise linear automorphism, then all the maps $L_i$ in the definition must have $\det(L_i)=1$, i.e., $L_i\in \SL(2,\Z)$.  It follows that if $\Delta$ is a regular fan compatible with $F$, then the fan $F(\Delta)$ obtained by mapping forward all cones in $\Delta$ is also regular, though it is not necessarily compatible with $F$.

%
%

\subsection{Equivalence between realizations of $\TT$}

In proving Theorem A we will always think of elements of $\TT$ as piecewise linear automorphisms of $\Z^2$.  Nevertheless, since $\TT$ is more commonly given as the group of dyadic circle homeomorphisms, we digress briefly to indicate how to translate between the two points of view.  Every piecewise linear homeomorphism $F:\R^2\to\R^2$ induces a circle homeomorphism by projectivizing.  More precisely, $F$ permutes the set $\RR$ of real rays in $\R^2$, which are parameterized by $\S^1$.  If, moreover, $F$ restricts to a bijection $\Z^2\to\Z^2$, then the induced homeomorphism of $\S^1$ completely determines $F$.  It remains to give a homeomorphism $\phi:\RR \to \S^1$ that conjugates (the projectivizations of) piecewise linear automorphisms of $\Z^2$ to dyadic circle homeomorphisms.  In the following, we will regard $\S^1$ as the interval $[0,1]$ with endpoints identified and define a map $\phi:\RR \to [0,1]$. 

This is accomplished locally as follows.  Let $I = [\frac{a}{2^k},\frac{a+1}{2^k}]$ be a ``dyadic standard\footnote{A dyadic sub-interval $I\subset[0,1]$ is called {\em standard} if it is of the form $I = [\frac{a}{2^k},\frac{a+1}{2^k}]$. For example, the interval $[\frac 1 4,\frac 1 2]$ is dyadic and standard, whereas $[\frac 1 4,\frac 3 4]$ is dyadic but not standard. }''
interval and $\sigma\subset \R^2$ be a regular sector generated by the (oriented) basis $v_1,v_2\subset \Z^2$.   Taking $\RR_\sigma\subset \RR$ to be the set of real rays in $\sigma$, we define a homeomorphism $\phi:\RR_\sigma\to I$ as follows, identifying each rational ray with its generator $v\in\Z^2$.  We first assign facets to endpoints: $\phi(v_1) = \frac{a}{2^k}$ and $\phi(v_2) = \frac{a+1}{2^k}$.  Then we proceed inductively by averaging: $\phi(v_1+v_2) = \frac12(\phi(v_1) + \phi(v_2))$, and more generally, if $\phi$ is defined at two primitive points (i.e. integral points where the two coordinates are coprime) $v,v'\in\sigma\cap \Z^2$ but at no points in the intervening sector, then we set $\phi(v+v') = \frac12(\phi(v)+\phi(v'))$.  One checks that this procedure results in an order-preserving bijection between rational rays in $\sigma$ and dyadic points in $I$ which therefore extends to an orientation-preserving homeomorphism $\phi:\RR_\sigma \to I$.  Note that when $I=[0,1]$ and $\sigma$ is the cone generated by $(1,0)$ and $(1,1)\in\R^2$ with $\RR_\sigma$ parameterized by the slope, $\phi$ is known as \emph{Minkowski's question mark function}.  

One globalizes $\phi$ by choosing any partition $\R^2 = \sigma_1\cup\dots\cup\sigma_d$ into regular sectors and a corresponding partition $[0,1]=I_1\cup\dots\cup I_d$ into standard dyadic intervals and defining $\phi|_{\RR_{\sigma_j}}:\RR_{\sigma_j}\to I_j$ sector-wise as above.  

To see that $\phi$ conjugates piecewise linear automorphisms of $\Z^2$ to dyadic circle maps, it suffices to verify the following additional fact.  Let $\phi:\RR_\sigma \to I$ be the `local' version of $\phi$ defined above.  Let $\sigma'$ be another regular sector, $I'$ another standard dyadic interval, and $\phi':\RR_{\sigma'}\to I'$ the analogous homeomorphism.  Then $\phi'\circ F = f\circ \phi$ where $F\in \SL(2,\Z)$ is the unique element such that $F(\sigma) = \sigma'$ and $f:\R\to\R$ is the unique increasing affine map such that $f(I) = I'$.

\section{Decomposition of piecewise linear automorphisms of $\Z^2$}
\label{Sec:decomposition}

A \emph{refinement} of a fan $\Delta$ is a fan $\Delta'$ with the same support as $\Delta$ and such that for every cone $\sigma'\in\fan'$, there exists $\sigma\in\fan$ such that $\sigma'\subseteq\sigma$. The following lemmas are well-known in convex geometry, but we include the proofs for completeness.

\begin{lem}
\label{lem:regularization}
Any fan admits a regular refinement.
\end{lem}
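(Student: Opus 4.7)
Since regular refinements of the individual sectors of $\Delta$ glue together to give a regular refinement of $\Delta$, it suffices to prove that any single sector $\sigma = \R_+ v_1 + \R_+ v_2$, with $v_1,v_2$ the primitive generators of its facets, admits a subdivision into regular sectors. The natural invariant to induct on is $n(\sigma) := |\det[v_1\ v_2]| \in \Z_{>0}$, and by definition $\sigma$ is regular precisely when $n(\sigma) = 1$.

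For the inductive step, assume $n(\sigma) \geq 2$. The half-open parallelogram
\[
P = \{\alpha v_1 + \beta v_2 : 0 \leq \alpha, \beta < 1\}
\]
is a fundamental domain for the sublattice $\Z v_1 + \Z v_2 \subseteq \Z^2$, so $P \cap \Z^2$ has exactly $n(\sigma) \geq 2$ elements, and in particular contains some $w \neq \origin$. Because $v_1$ and $v_2$ are primitive, no point of $P \cap \Z^2$ other than $\origin$ lies on either facet ray; hence the coefficients of $w$ satisfy $0 < \alpha, \beta < 1$. Replacing $w$ by the primitive generator of the ray $\R_+ w$ keeps $w$ in $P$ with both coefficients still strictly between $0$ and $1$. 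Inserting the ray $\R_+ w$ splits $\sigma$ into two sectors $\sigma' = \R_+ v_1 + \R_+ w$ and $\sigma'' = \R_+ w + \R_+ v_2$, and a direct determinant computation gives
\[
n(\sigma') = \beta\, n(\sigma) < n(\sigma), \qquad n(\sigma'') = \alpha\, n(\sigma) < n(\sigma).
\]
Applying the inductive hypothesis to $\sigma'$ and $\sigma''$ yields the desired regular refinement.

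The only delicate point is exhibiting the interior lattice vector $w$ and verifying that both new indices drop strictly; everything else is bookkeeping. This inductive splitting is essentially the Euclidean / Hirzebruch–Jung algorithm applied to the slopes of $v_1$ and $v_2$, and is the classical toric construction for resolving a two-dimensional lattice cone.
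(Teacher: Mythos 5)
Your proof is correct and follows essentially the same strategy as the paper's: subdivide a non-regular sector along the ray through a primitive lattice point in the interior of the parallelogram spanned by the facet generators, and conclude because the determinant invariant strictly decreases. You simply supply more detail than the paper (the fundamental-domain count showing $P\cap\Z^2$ has $n(\sigma)$ points, and the explicit formulas $n(\sigma')=\beta\,n(\sigma)$, $n(\sigma'')=\alpha\,n(\sigma)$), all of which checks out.
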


\begin{proof}
It suffices to prove that any sector $\sigma$ may be partitioned into regular sectors.
If $\sigma$ is not regular, then the ray generators $u_1,u_2$ of its facets must satisfy $|\det[u_1\ u_2]|\ge 2$.
This implies that there must be at least one primitive point $w$ in the interior of the parallelogram formed by $u_1$ and $u_2$.
The ray $\R_+ w$ divides $\sigma$ into two new sectors.  Since $|\det[w\ u_i]| < |\det[u_1\ u_2]|$, $i=1,2$, we see that finitely many 
such subdivisions will result in a partition of $\sigma$ into regular sectors.
\end{proof}

Let $\sigma\in\fan$ be a regular sector with its boundary rays generated by $u_1,u_2\in\Z^2$.  Let $\tau\subset \sigma$ be the ray generated by some point $u\in\Z^2$ in the interior of $\sigma$.  Then $\sigma\setminus\tau$ is a union of two disjoint sectors $\sigma_1,\sigma_2$. One checks that they are both regular if and only if $u=u_1+u_2$.  We then call the fan 
$\fan'=(\fan\setminus\{\sigma\})\cup\{\sigma_1,\sigma_2,\tau\}$ the {\em simple split}
of $\fan$ at $\sigma$. If $\fan$ is regular then so is $\fan'$.  Conversely, we call $\Delta$ the {\em simple merge} of $\Delta'$ at $\sigma_1$ and $\sigma_2$.

\begin{lem}
\label{lem:regular_refinement}
Suppose that $\fan,\fan'$ are regular fans, and $\fan'$ refines $\fan$. Then one can obtain
$\fan'$ from $\fan$ by a sequence of simple splits.
\end{lem}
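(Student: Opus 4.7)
The plan is to induct on $|\fan'|-|\fan|$, the number of rays present in $\fan'$ but not in $\fan$. If this number is zero, then $\fan=\fan'$ and there is nothing to prove. Otherwise, I will locate a ray in $\fan'$ that can be removed by a simple merge to produce a regular refinement of $\fan$ with one fewer ray, and then invoke the inductive hypothesis, appending the corresponding simple split at the end.

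The work, then, is to find a mergeable ray. Since $\fan'$ properly refines $\fan$, some sector $\sigma\in\fan$ with generating rays $u_1,u_2$ is properly subdivided by $\fan'$; let $u_1=w_0,w_1,\dots,w_k=u_2$ ($k\ge 2$) be the generators of the rays of $\fan'$ contained in $\sigma$, listed counterclockwise. The goal is to show some interior $w_i$ satisfies $w_{i-1}+w_{i+1}=w_i$, since we can then simple-merge $\fan'$ at that ray to obtain a regular fan $\fan''$ that still refines $\fan$.

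For the key combinatorial step I would argue as follows. Each consecutive pair $w_{j-1},w_j$ is a positively oriented basis of $\Z^2$, so writing $w_{j+1}=\alpha w_{j-1}+\beta w_j$ and computing $\det[w_j\ w_{j+1}]=1$ forces $\alpha=-1$. Hence
\[
w_{j-1}+w_{j+1}=\beta_j w_j
\]
for some integer $\beta_j\ge 1$. Now express $w_j=a_j u_1+b_j u_2$ in the basis $(u_1,u_2)$, and set $c_j:=a_j+b_j$. The relation above gives $c_{j-1}+c_{j+1}=\beta_j c_j$, and since each interior $w_j$ lies in $\rint\sigma$ we have $a_j,b_j\ge 1$, so $c_j\ge 2$ for $0<j<k$, while $c_0=c_k=1$.

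Suppose for contradiction that $\beta_j\ge 2$ for every $j\in\{1,\dots,k-1\}$. Then $c_{j+1}-c_j\ge c_j-c_{j-1}$, i.e.\ the first differences $d_j:=c_j-c_{j-1}$ are non-decreasing in $j$. But $d_1=c_1-1\ge 1$, so every $d_j\ge 1$, forcing $\sum_{j=1}^k d_j\ge k\ge 2$, which contradicts $\sum_j d_j=c_k-c_0=0$. Thus some $\beta_i=1$, which gives the desired $w_i=w_{i-1}+w_{i+1}$.

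The only potential subtlety — and the place I would be most careful when writing the full proof — is verifying the sign conventions in the determinant calculation that produces the relation $w_{j-1}+w_{j+1}=\beta_j w_j$ with $\beta_j\ge 1$: this uses both that the $w_j$ proceed counterclockwise across $\sigma$ and that consecutive pairs are ordered bases. Everything else reduces to routine bookkeeping: the merge at $w_i$ yields a regular fan by the converse direction of the simple-merge construction described just before the lemma, and $\fan''$ still refines $\fan$ because merging only affects a single sector lying in $\sigma$.
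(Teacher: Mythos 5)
Your argument is correct, but it runs the induction in the opposite direction from the paper and rests on a different key computation, so it is worth comparing the two. The paper also reduces to a single subdivided regular sector $\sigma=\R_+u+\R_+v$ and inducts on the number of extra rays, but it builds $\Delta'$ \emph{up} from $\Delta$: the key step is to show that the barycentric ray $\R_+(u+v)$ must already belong to $\Delta'$ whenever $\Delta'\neq\Delta$, which follows from a two-line determinant inequality ($1=ad-bc\ge(b+1)(c+1)-bc=b+c+1$ applied to the facets of the sector of $\Delta'$ containing $u+v$); one then performs the simple split of $\Delta$ at $\sigma$ and recurses. You instead tear $\Delta'$ \emph{down} by locating a mergeable interior ray $w_i=w_{i-1}+w_{i+1}$, which you find via the relation $w_{j-1}+w_{j+1}=\beta_jw_j$ with $\beta_j\ge1$ and a telescoping/convexity argument on the coordinate sums $c_j$. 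Both arguments are sound and elementary; the two pivot rays are in a sense dual (the first ray to add versus the last ray to remove). The paper's version is shorter because the single inequality does all the work, while yours makes visible the classical Stern--Brocot/continued-fraction structure of regular subdivisions and yields the merge sequence explicitly. The two subtleties you flag are indeed the right ones to check, and both go through: the sign $\alpha=-1$ and the bound $\beta_j\ge1$ follow from the counterclockwise ordering together with the fact that $\sigma$, being a convex sector, spans an angle less than $\pi$; and the merged sector generated by $w_{i-1},w_{i+1}$ is regular since $\det[w_{i-1}\ w_{i+1}]=\det[w_{i-1}\ w_i]=1$, and it is contained in $\sigma$ by convexity, so $\Delta''$ is again a regular refinement of $\Delta$ and the induction closes.
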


\begin{proof}
We need to consider only the case where $\fan$ is the fan determined by a single regular sector $\sigma := \R_+ u + \R_+ v$.  Working inductively, it suffices to show that either $\fan'$ includes the ray 
$\tau := \R_+ (u+v)$ that barycentrically subdivies $\sigma$, or $\Delta' = \Delta$.  If $\tau \notin \Delta'$, then we let $\tau_1 = \R_+(au + bv)$, 
$\tau_2 = \R_+(cu + dv)$ be the facets of the sector $\sigma' \in \Delta'$ that contains $\tau$.  We may assume that $a > b\geq 0$ and $d> c\geq 0$ are all integers.  Since $\sigma'$ is regular, we get
$$
1 = ad-bc \geq (b+1)(c+1) - bc = b+c+1.
$$
Thus $b=c = 0$ and $a=d=1$, i.e. $\sigma' = \sigma$ and $\fan' = \fan$.
\end{proof}

From now on, $F$ will always denote a piecewise linear automorphism of $\Z^2$.  Our proof of Theorem A amounts to carefully analyzing how fans transform under $F$.  Note that if $\Delta$ is a  regular fan compatible with $F$, then the image fan $F(\Delta) :=\{F(\sigma):\sigma\in \Delta\}$ is also regular.  If $\Delta'$ is another regular fan, then we say that $F$ is an {\em isomorphism} from $\Delta$ to $\Delta'$ if $F(\Delta) = \Delta'$.  Similarly, we say that $F$ is a {\em simple split} or {\em simple merge} of $\fan'$ if $F(\Delta)$ is accordingly, a simple split or simple merge of $\Delta'$.  We call $F$ a {\em simple} map from $\Delta$ to $\Delta'$ if $F$ is of one of these three types.

Regardless, we associate to $F$ a partially defined, `approximate' map $F_\sharp:\Delta\to\Delta'$ as follows: for each $\sigma\in\fan$, we take $F_\sharp\sigma$ to be the smallest cone in $\fan'$ containing $F(\sigma)$.  If no single cone in $\fan'$ contains $F(\sigma)$, then we leave $F_\sharp\sigma$ undefined.  Hence $F_\sharp\tau$ is defined for all rays $\tau\in\fan$, though the image might be a sector or a ray; whereas for each sector $\sigma\in\Delta$, the cone $F_\sharp \sigma$ is either another sector or undefined. If $G$ is another piecewise linear automorphism of $\Z^2$ and $G_\sharp:\fan'\to\fan''$ is an approximate map of fans, then we have $G_\sharp F_\sharp \sigma = (G\circ F)_\sharp\sigma$ provided that the left side is defined.

\begin{prop}
\label{prop:decomposition}
Let $\fan$ be a regular fan compatible with $F$.  Then there is a decomposition $F=f_{n-1}\circ\dots \circ f_0$ into piecewise linear automorphisms of $\Z^2$ and a corresponding sequence of  regular fans $\Delta_0,\dots,\Delta_n$ such that
\begin{itemize}
 \item $\Delta_0 = \Delta_n = \Delta$;
 \item $\Delta_j$ is compatible with $f_j$ for $0\leq j\leq n-1$;
 \item $f_j$ is a simple map from $\Delta_j$ to $\Delta_{j+1}$.
\end{itemize}
\end{prop}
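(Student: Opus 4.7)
The plan is to realize $F$ by an isomorphism step in the first slot and then use identity maps combined with simple splits and merges to bring $F(\Delta)$ back to $\Delta$. Concretely, I set $\Delta_0 = \Delta$, $f_0 = F$, and $\Delta_1 = F(\Delta)$. Since $\Delta$ is compatible with $F$ by hypothesis and $F$ sends regular fans to regular fans (as noted in Section~\ref{Sec:PL_auto}), this first step is an isomorphism from $\Delta_0$ to $\Delta_1$ in the sense of the proposition.

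The remaining task is to connect $F(\Delta)$ back to $\Delta$ via identity maps that are simple splits or merges. I first construct a common regular refinement $\Delta^*$ of $\Delta$ and $F(\Delta)$ by taking their geometric overlay---a fan whose rays are the union of those of $\Delta$ and $F(\Delta)$---and applying Lemma~\ref{lem:regularization}. By Lemma~\ref{lem:regular_refinement} applied to the pair $F(\Delta),\Delta^*$, there is a chain of simple splits taking $F(\Delta)$ up to $\Delta^*$. Reading each refinement step as an identity map $f_j = \id$ from a coarser fan $\Delta_j$ to a finer fan $\Delta_{j+1}$, one has $f_j(\Delta_j) = \Delta_j$ equal to a simple merge of $\Delta_{j+1}$, which is a simple-merge step in the proposition's convention. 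Symmetrically, applying Lemma~\ref{lem:regular_refinement} to $\Delta,\Delta^*$ produces a chain of simple splits from $\Delta$ up to $\Delta^*$; read in reverse, this is a chain of identity maps $\Delta^* \to \cdots \to \Delta$ in which each step qualifies as a simple-split step in the proposition's sense. Concatenating yields the desired sequence $\Delta=\Delta_0,\, F(\Delta)=\Delta_1,\, \ldots,\, \Delta^*,\, \ldots,\, \Delta_n=\Delta$ whose composite is $\id \circ \cdots \circ \id \circ F = F$.

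Regularity is maintained throughout: $F(\Delta)$ is regular because $F$ preserves regularity, $\Delta^*$ is regular by construction, and the intermediate fans along each split chain remain regular by Lemma~\ref{lem:regular_refinement}. Compatibility is automatic, since every fan is compatible with the identity and $\Delta$ is compatible with $F$ by hypothesis. The only nontrivial issue is a bookkeeping one rather than a real obstacle: one must reconcile the direction conventions, noting that an identity step which refines the fan counts as a ``simple merge'' in the proposition's language, while one that coarsens counts as a ``simple split.'' Once that is in hand, the proof is essentially a concatenation of the two lemmas with a single isomorphism step carrying $F$.
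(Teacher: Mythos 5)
Your proposal is correct and follows essentially the same route as the paper: take $f_0=F$ as an isomorphism onto $F(\Delta)$, pass to a common regular refinement of $\Delta$ and $F(\Delta)$ via Lemmas~\ref{lem:regularization} and~\ref{lem:regular_refinement}, and fill in the rest of the decomposition with identity maps realizing the two chains of simple splits/merges. The only difference is cosmetic: you make the overlay construction of the common refinement and the split/merge labeling conventions explicit, whereas the paper leaves both implicit.
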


\begin{proof}
First, set $f_0=F$ and $\fan_1=F(\fan)$. Lemma \ref{lem:regularization} then gives a regular common refinement $\fan'$ of $\fan$ and $F(\fan)$. Then by Lemma~\ref{lem:regular_refinement}, we know that we can obtain $\fan'$ from $F(\fan)$ by performing finitely many simple splits, i.e. we have a sequence
\[
\xymatrix{
\fan \ar[rr]^{F} && \fan_1=F(\fan) \ar[rr]^{\id} && \fan_2 \ar[rr]^{\id}
&&\cdots \ar[rr]^{\id} &&
\fan_k=\fan'
}
\]
in which all but the first arrow represents a simple split; and the first arrow is an isomorphism.  Similarly, there is a sequence
\[
\xymatrix{
\fan_k=\fan' \ar[rr]^{\id} && \fan_{k+1} \ar[rr]^{\id} &&\cdots \ar[rr]^{\id}
&& \fan_{n-1} \ar[rr]^{\id} &&
\fan = \fan_n
}
\]
such that each arrow is a simple merge, $i=k,\cdots,n-1$.  Putting the two sequences together completes the proof.
\end{proof}

\section{Proof of Theorem A}
\label{Sec:proof}
We call a cone $\tau\in\Delta$ {\em deterministic} for the approximate self-map $F_\sharp:\Delta\to\Delta$ if all its approximate forward images $(F_\sharp)^j\tau$, $j\ge 1$, are defined. 

\begin{prop}
For any piecewise linear automorphism $F$ and any fan $\fan$, there exists a regular refinement $\fan'$ of $\fan$ such that $\fan'$ is regular and compatible with $F$, and every ray in $\fan'$ is deterministic for $F_\sharp:\Delta'\to\Delta'$.
\end{prop}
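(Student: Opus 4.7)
The plan is to construct $\fan'$ by iteratively refining a starting fan. First, I would invoke Lemma~\ref{lem:regularization} to obtain a regular fan $\fan_0$ that refines $\fan$ and is compatible with $F$ (for example, by regularizing a common refinement of $\fan$ with any fan already compatible with $F$, as in the proof of Proposition~\ref{prop:decomposition}). The task then reduces to refining $\fan_0$ further so that every ray becomes deterministic for $F_\sharp$.

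For any ray $\tau\in\fan_0$, the approximate orbit $\tau, F_\sharp\tau, F_\sharp^2\tau,\dots$ lives in the finite set of cones of $\fan_0$. Since $\fan_0$ is complete, $F_\sharp\tau$ is always defined; the orbit can first fail to be defined only at a step where $\sigma := F_\sharp^j\tau$ is a sector whose image $F(\sigma)$ properly crosses some ray $\rho\in\fan_0$. In that case, write $L_\sigma = F|_\sigma\in\SL(2,\Z)$; then $\rho^{*} := L_\sigma^{-1}\rho$ is a rational ray lying in the interior of $\sigma$. Adjoining $\rho^{*}$ to $\fan_0$ splits $\sigma$ into two sub-sectors $\sigma_a,\sigma_b$ whose images $F(\sigma_a)$, $F(\sigma_b)$ sit on opposite sides of $\rho$ and are each contained in a single sector of the enlarged fan. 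If the split is not simple, Lemma~\ref{lem:regularization} subdivides $\sigma_a$ and $\sigma_b$ further into regular sectors, yielding a strictly finer regular fan $\fan_1$ still compatible with $F$. Observe that the newly introduced ray $\rho^{*}$ satisfies $F_\sharp\rho^{*} = \rho$, so its determinism reduces to that of the ray $\rho$ already present.

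I would iterate this refinement until no non-deterministic ray remains. The main obstacle, on which I expect to spend most effort, is proving that the procedure terminates after finitely many steps and does not simply trade one bad ray for another. The plan for termination is to order the refinements so as to strictly decrease a combinatorial complexity: one natural choice is to always resolve a non-deterministic ray whose orbit breaks at the smallest index $j$, and then verify that after the split this minimum break index strictly increases (the new ray $\rho^{*}$ has break index one greater than that of $\rho$, which by minimality of $j$ cannot have broken earlier). Because the successively finer fans have sectors that shrink while their images $F(\sigma)$ stay in the same ambient pieces of $F$'s minimal compatible fan, the number of possible new pre-image rays $L_\sigma^{-1}(\rho)$ that can ever be created is finite, and a careful accounting should force termination and yield the desired $\fan'$.
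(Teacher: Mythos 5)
Your opening moves match the paper's in spirit: pass to a regular fan compatible with $F$, locate a sector $\sigma$ in the approximate orbit of a bad ray whose image crosses a ray $\rho$, and repair it by pulling $\rho$ back into $\sigma$. But the proposition's entire difficulty is the termination of this repair process, and that is exactly where your argument has a genuine gap. Two specific problems. First, your local step is not as clean as you describe: adjoining $\rho^{*}=L_\sigma^{-1}\rho$ generally splits $\sigma$ into \emph{non-regular} pieces, so you must regularize, which introduces further rays $\mu_1,\dots,\mu_r$; moreover $F(\sigma)$ may cross several rays, not just one. Each newly added ray can \emph{destroy} determinism of rays that were previously fine, because a sector $\sigma'$ elsewhere in the fan whose image used to lie in a single cone may now have $F(\sigma')$ crossing $\rho^{*}$ or one of the $\mu_i$. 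Consequently your proposed monovariant fails: the minimal break index need not increase (a regularization ray $\mu_i$ can have break index $1$), and it can even decrease for rays away from $\sigma$. Second, the claim that ``the number of possible new pre-image rays $L_\sigma^{-1}(\rho)$ that can ever be created is finite'' is unfounded and essentially circular: as the fan grows, $\rho$ ranges over an ever-larger set, so the rays you can create form (at least) the full backward orbit under the branches of $F$ of the original rays, which is finite only if those rays are pre-periodic --- and pre-periodicity of rays is, in effect, the conclusion you are trying to reach.

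The paper avoids all of this by first proving Proposition~\ref{prop:decomposition}: $F$ is factored through a cyclic sequence of regular fans $\Delta_0,\dots,\Delta_n=\Delta_0$ joined by \emph{simple} maps (isomorphisms, simple splits, simple merges). In that setting each repair is a single barycentric split of a regular sector into two regular sectors (no cascading regularization), a minimality choice of the window $k-i$ controls interference from merges, and --- crucially --- each round of repairs converts one $f_k$ from a split into an isomorphism while leaving the other $f_j$ simple of the same type. The number of non-isomorphisms among $f_0,\dots,f_{n-1}$ is the strictly decreasing quantity that forces termination. To salvage your approach you would need to either import that decomposition or exhibit a comparably concrete finite complexity that your pullback-and-regularize step strictly decreases; as written, the argument does not establish that the procedure halts.
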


\begin{proof}
Let $f_j$ and $\Delta_j$ be the maps and associated fans in the decomposition of $F$ from Proposition \ref{prop:decomposition}.  Since $\Delta_0 = \Delta_n$, we may extend these sequences of maps/fans periodically, e.g. $f_j := f_{j\mod n}$ for all $j\in\N$.  Suppose that there exists a ray $\tau\in\Delta$ that is not deterministic for $F_\sharp:\Delta\to\Delta$.  In particular, we know $F(\Delta)\neq \Delta$, so $n\geq 2$.  Then by replacing $\tau$ with an approximate forward image, we may assume that $F_\sharp \tau = \sigma$ is a sector and that $(F_\sharp)^k \sigma$ is undefined for $k\in\N$ large enough.  

We may refine this observation using the decomposition of $F$.  Namely, there exist indices $i < k \in\N$, and a ray $\tau_i\in\Delta_i$ such that
\begin{itemize}
 \item[(i)] $\sigma_{i+1} := f_{i\sharp} \tau_i \in \Delta_{i+1}$ is a sector;
 \item[(ii)] $\sigma_{j+1} := f_{j\sharp}\sigma_j \in \fan_{j+1}$ is well-defined for $ j =i+1,\cdots, k-1$, but $f_{k\sharp}\sigma_{k}$ is not defined;
 \item[(iii)] $i,\tau_i,k$ are chosen so that $k-i$ is as small as possible given the first two conditions. 
\end{itemize}
That is, we attend to the following sequence:
\[
\xymatrix{
\tau_i\in\fan_{i} \ar[rr]^{f_i} && \sigma_{i+1}\in\fan_{i+1} \ar[rr]^{f_{i+1}} &&\cdots \ar[rr]^{f_{k-1}} &&
\sigma_{k}\in\fan_k  \ar[rr]^{f_k} &&
\fan_{k+1} 
}
\]
The idea is to inductively refine the fans $\fan_{i+1},\cdots,\fan_{k}$ so that $f_k$ becomes an isomorphism.
 
Condition (i) together with the fact that $f_i$ is simple from $\Delta_i$ to $\Delta_{i+1}$ means that $\tau_{i+1} := f_i(\tau_i)$ is the unique ray that divides $\sigma_{i+1}$ into two regular cones.  Replacing $\sigma_{i+1}$ with these two cones and adding the ray $\tau_{i+1}$ to $\Delta_{i+1}$ turns the map $f_i:\Delta_i\to\Delta_{i+1}$ into an isomorphism from $\Delta_i$ to $\Delta_{i+1}$.  Note that in this (and succeeding) steps, we apply the same refinement to all fans $\Delta_{i+\ell n}$ equivalent mod $n$ to $\Delta_i$.  Since $n\geq 2$ this does not affect the outcome.

Turning from $f_i$ to $f_{i+1}$, we find two possible cases.  If $i+1=k$, then $f_{i+1}(\sigma_{i+1})$ is equal to the union of two adjacent sectors in $\Delta_{i+2}$.  Since the above refinement of $\Delta_{i+1}$ split $\sigma_{i+1}$ into two regular cones, $f_{i+1}$ is now an isomorphism from $\Delta_{i+1}$ to $\Delta_{i+2}$. 

Otherwise, $i+1<k$.  If $f_{i+1}$ was initially a merge, then there is another ray $\tilde\tau_{i+1}\in\Delta_{i+1}$ such that $f_{i+1}(\tilde\tau_{i+1})\notin\Delta_{i+2}$.  However, our minimality assumption on $k-i$ guarantees that the images $f_{i+1}(\tilde\tau_{i+1})$ and $f_{i+1}(\tau_{i+1})$ lie in different sectors.  Hence regardless of whether $f_{i+1}$ is a merge or not, $f_{i+1}(\tau_{i+1})$ divides $\sigma_{i+2}$ into two \emph{regular} cones.  So as with $\Delta_{i+1}$, we can refine $\Delta_{i+2}$ by splitting $\sigma_{i+2}$ into two regular sectors, the result being that $f_{i+1}$ remains a simple map.  In this case, we move on to $f_{i+2}$ and repeat the process until we reach $f_k$.  In the end, $f_k$ is improved from a split to an isomorphism, and maps $f_j$ with $j\not\equiv k\mod n$ remain simple of the same type as before.


If there remain rays in $\Delta=\Delta_0$ that are not deterministic for $F_\sharp$, then we repeat the process.  Since there are only finitely many $f_j$ (modulo $n$), finitely many repetitions leads to a situation in which either all rays in $\Delta$ are deterministic or all $f_j$ are isomorphisms.  In the latter case $F$ is itself an isomorphism from $\Delta$ to $\Delta$, and all rays are deterministic anyhow.
\end{proof}

Finally, we can now prove the first part of Theorem A.

\begin{proof}[Proof of Theorem A, part I]
Given any piecewise linear automorphism $F$ of $\Z^2$,
we first find any fan $\fan$ such that $\fan$ is compatible
with $F$. Next, we find a regular refinement $\fan'$ of $\fan$, then $\fan'$ is still compatible with $F$.
Finally, we find a further refinement $\fan''$ such that every ray in $\fan''$
is deterministic for $F_\sharp$. Observe how $F$ acts
on the rays and sectors of $\fan''$, either one of the following will happen:

\begin{itemize}
\item There is never a ray mapped into a sector, so rays always map to rays, and $F$ is permuting the rays. In this case, some iterate of every ray is going back to itself and $F$ is of finite order.

\item There is a ray maps into a sector $\sigma$, then $\sigma$ must be deterministic for $F_\sharp$, and $(F_\sharp)^j \sigma$, $j\ge 1$, 
are all defined and are sectors. Then again, we have finitely many sectors, there must be some $i<j$ such that
$(F_\sharp)^i \sigma=(F_\sharp)^j \sigma$. As a consequence, there is a ray in $(F_\sharp)^i \sigma$ that is fixed by $F^{j-i}$,
so the circle map induced by $F$ has a periodic point.
\end{itemize}

In either case, the rotation number of $F$ is rational because there is a periodic point.
This completes the proof.
\end{proof}

\begin{rem}
Our proof also gives an algorithm for finding the rotation number of a piecewise linear automorphism $F$. First, find a fan $\fan$ that is compatible with $F$. Then find a regular refinement $\fan'$ of $\Delta$ such that every ray in $\fan'$ is deterministic for $F_\sharp$. This can be done effectively by
Lemma~\ref{lem:regularization}, Lemma~\ref{lem:regular_refinement}, and Proposition~\ref{prop:decomposition}.  Tracing the orbit of each ray in $\Delta'$, as we did in the above proof, will locate the sectors containing periodic (real) rays, and this will then give the rotation number of $F$.
\end{rem}

\begin{rem}
We stress here that in the definition of piecewise linear automorphism,
the condition $F(\Z^2)=\Z^2$ is crucial for the Theorem A to hold. If we only require that $F(\Z^2)\subseteq\Z^2$,
then it is easy to find a linear map, e.g.,
$F=\left[\begin{smallmatrix} 4 & -3 \\ 3 & 4 \end{smallmatrix}\right]$
such that the corresponding circle homeomorphism has an irrational rotation number.
\end{rem}

The second part of Theorem A can be proved by a concrete construction.

\begin{proof}[Proof of Theorem A, part II]
It suffices to show that for any given integer $n\ge 3$, we can find a piecewise
linear automorphism $F$ of $\Z^2$ such that $\rho(F)=1/n$.
For each integer $n\ge 3$ we can find a regular fan $\fan$ with $n$ sectors,
say $\sigma_0,\sigma_1,\cdots,\sigma_n=\sigma_0$, arranged in the counterclockwise order.
Since each $\sigma_i$ is regular, there is a unique $L_i\in \SL_2(\Z)$ such that $L_i$
maps $\sigma_i$ bijectively onto $\sigma_{i+1}$, $i=0,\cdots,n-1$ and maps the lattice
points in $\sigma_i$ bijectively onto lattice points in $\sigma_{i+1}$.
Define $F$ as $F|_{\sigma_i}=L_i|_{\sigma_i}$, i.e., $F$ is rotating the sectors,
sending $\sigma_i$ to $\sigma_{i+1}$. Then $F$ is periodic of period $n$, and therefore
$\rho(F)=1/n$.
\end{proof}


\begin{bibdiv}
\begin{biblist}

\bib{MR3175247}{article}{
   author={Belk, James},
   author={Matucci, Francesco},
   title={Conjugacy and dynamics in Thompson's groups},
   journal={Geom. Dedicata},
   volume={169},
   date={2014},
   pages={239--261},
   issn={0046-5755},
   review={\MR{3175247}},
   doi={10.1007/s10711-013-9853-2},
}

\bib{BKM}{article}{
   author={Bleak, Collin},
   author={Kassabov, Martin},
   author={Matucci, Francesco},
   title={Structure theorems for groups of homeomorphisms of the circle},
   journal={Internat. J. Algebra Comput.},
   volume={21},
   date={2011},
   number={6},
   pages={1007--1036},
   issn={0218-1967},
   review={\MR{2847521}},
   doi={10.1142/S0218196711006571},
}

\bib{BBetc}{article}{
   author={Bleak, Collin},
   author={Bowman, Hannah},
   author={Gordon Lynch, Alison},
   author={Graham, Garrett},
   author={Hughes, Jacob},
   author={Matucci, Francesco},
   author={Sapir, Eugenia},
   title={Centralizers in the R. Thompson group $V_n$},
   journal={Groups Geom. Dyn.},
   volume={7},
   date={2013},
   number={4},
   pages={821--865},
   issn={1661-7207},
   review={\MR{3134027}},
   doi={10.4171/GGD/207},
}

\bib{Cal}{article}{
   author={Calegari, Danny},
   title={Denominator bounds in Thompson-like groups and flows},
   journal={Groups Geom. Dyn.},
   volume={1},
   date={2007},
   number={2},
   pages={101--109},
   issn={1661-7207},
}

\bib{CFP}{article}{
   author={Cannon, J. W.},
   author={Floyd, W. J.},
   author={Parry, W. R.},
   title={Introductory notes on Richard Thompson's groups},
   journal={Enseign. Math. (2)},
   volume={42},
   date={1996},
   number={3-4},
   pages={215--256},
   issn={0013-8584},
   review={\MR{1426438 (98g:20058)}},
}

\bib{DF}{article}{
   author={Diller, J.},
   author={Favre, C.},
   title={Dynamics of bimeromorphic maps of surfaces},
   journal={Amer. J. Math.},
   volume={123},
   date={2001},
   number={6},
   pages={1135--1169},
   issn={0002-9327},
}

\bib{DL}{article}{
   author={Diller, Jeffrey},
   author={Lin, Jan-Li},
   title={Rational surface maps with invariant meromorphic two-forms},
   journal={Math. Ann.},
   volume={364},
   date={2016},
   number={1-2},
   pages={313--352},
   issn={0025-5831},
}

\bib{Fav}{article}{
   author={Favre, Charles},
   title={Le groupe de Cremona et ses sous-groupes de type fini},
   language={French, with French summary},
   note={S\'eminaire Bourbaki. Volume 2008/2009. Expos\'es 997--1011},
   journal={Ast\'erisque},
   number={332},
   date={2010},
   pages={Exp. No. 998, vii, 11--43},
   issn={0303-1179},
   isbn={978-2-85629-291-4},
}

\bib{Ghys}{article}{
   author={Ghys, {\'E}tienne},
   title={Groups acting on the circle},
   journal={Enseign. Math. (2)},
   volume={47},
   date={2001},
   number={3-4},
   pages={329--407},
   issn={0013-8584},
   review={\MR{1876932 (2003a:37032)}},
}

\bib{GS}{article}{
   author={Ghys, {\'E}tienne},
   author={Sergiescu, Vlad},
   title={Sur un groupe remarquable de diff\'eomorphismes du cercle},
   language={French},
   journal={Comment. Math. Helv.},
   volume={62},
   date={1987},
   number={2},
   pages={185--239},
   issn={0010-2571},
   review={\MR{896095 (90c:57035)}},
   doi={10.1007/BF02564445},
}

\bib{Li}{article}{
   author={Liousse, Isabelle},
   title={Nombre de rotation, mesures invariantes et ratio set des
   hom\'eomorphismes affines par morceaux du cercle},
   language={French},
   journal={Ann. Inst. Fourier (Grenoble)},
   volume={55},
   date={2005},
   number={2},
   pages={431--482},
   issn={0373-0956},
   review={\MR{2147896}},
}

\bib{KH}{book}{
   author={Katok, Anatole},
   author={Hasselblatt, Boris},
   title={Introduction to the modern theory of dynamical systems},
   series={Encyclopedia of Mathematics and its Applications},
   volume={54},
   note={With a supplementary chapter by Katok and Leonardo Mendoza},
   publisher={Cambridge University Press, Cambridge},
   date={1995},
   pages={xviii+802},
   isbn={0-521-34187-6},
   review={\MR{1326374}},
   doi={10.1017/CBO9780511809187},
}
		
\bib{Ma}{thesis}{
  author={Matucci, Francesco}
  title={Algorithms and Classification in Groups of Piecewise-Linear Homeomorphisms},
  school={Cornell University},
  date={2008},
  url = {https://arxiv.org/abs/0807.2871},
}

\bib{Us}{article}{
   author={Usnich, Alexandr},
   title={Symplectic automorphisms of $\C\P^2$ and the Thompson group $T$},
   eprint={arXiv:0611604 [math.AG]}
}

\end{biblist}
\end{bibdiv}

\end{document}